\newtheorem{defi}{Definition}
\newtheorem{ex}{Example}
\newtheorem{thm}{Theorem}
\newtheorem{lemma}[thm]{Lemma}
\newtheorem{ot}{Question}
\newtheorem{poz}{Observation}
\newtheorem{prop}{Proposition}
\begin{document}

\title{Antipalindromic numbers}

\author{Lubom\' ira Dvo\v r\'akov\'a~\footnote{Faculty of Nuclear Sciences and Physical Engineering, Czech Technical University in Prague, Czech Republic, email: lubomira.dvorakova@fjfi.cvut.cz}, Stanislav Kruml~\footnote{Faculty of Nuclear Sciences and Physical Engineering, Czech Technical University in Prague, Czech Republic}, David Ryz\'ak~\footnote{Faculty of Mathematics and Physics, Charles University, Prague, Czech Republic}}
\date{}
\maketitle

\begin{abstract}
Everybody has certainly heard about palindromes: words that stay the same when read backwards. For instance kayak, radar, or rotor.
Mathematicians are interested in palindromic numbers: positive integers whose expansion in a certain integer base is a palindrome. The following problems are studied:  palindromic primes, palindromic squares and higher powers, multi-base palindromic numbers, etc. In this paper, we define and study antipalindromic numbers: positive integers whose expansion in a certain integer base is an antipalindrome. We present new results concerning divisibility and antipalindromic primes, antipalindromic squares and higher powers, and multi-base antipalindromic numbers. We provide a~user-friendly application for all studied questions.
\end{abstract}

\section{Introduction}

Everybody has certainly heard about palindromes: words that stay the same when read backwards. For instance kayak, radar, or rotor.
It is not surprising that in natural languages, it is impossible to find extremely long palindromes.
The longest palindrome in English is ``tattarrattat''. However its victory is doubtful since tattarrattat is a neologism created by
James Joyce in his novel Ulysses~\cite{Jo}; it expresses loud knocking at the door:

\begin{center}
``I was just beginning to yawn with nerves thinking he was trying to
make a~fool of me when I knew his tattarrattat at the door.''
\end{center}
Palindromic phrases are even more interesting. They provide palindromes if punctuation, capitalization, and spaces are ignored.
Some popular palindromic phrases in English are:
\begin{center}
\begin{tabular}{c}
``Do geese see God?''\\
``A man, a plan, a canal: Panama. ''\\
``Madam, in Eden, I’m Adam.''
\end{tabular}
\end{center}
Mathematicians are interested in palindromic numbers: positive integers whose expansion in a certain integer base is a palindrome.
Let us make a list of studied problems:
\begin{enumerate}
\item {\em Palindromic squares, cubes, and higher powers in base} $10$: The first nine terms of the sequence $1^2, {11}^2, {111}^2, {1111}^2, \dots$ are palindromic numbers $1, 121, 12321, 1234321, \dots$ (sequence A002477 in the OEIS~\cite{OEIS}). The only known non-palindromic number whose cube is a palindromic number is $2201$, and Simmons~\cite{Square2} conjectured that the fourth root of all palindromic fourth powers are palindromic numbers of the form ${10}^n + 1$. Simmons~\cite{Square1} also conjectured there are no palindromic numbers of the form $n^k$ for $k > 4$ and $n>1$.
\item {\em Palindromic primes}: The first few decimal palindromic primes are (sequence A002385 in the OEIS~\cite{OEIS}):
$$2, 3, 5, 7, 11, 101, 131, 151, 181, 191, 313, 353,$$
$$373, 383, 727, 757, 787, 797, 919, 929, \dots$$
Except for $11$, all palindromic primes have an odd number of digits because the divisibility test for $11$ tells us that every palindromic number with an even number of digits is divisible by $11$.
On one hand, it is not known if there are infinitely many palindromic primes in base $10$; the largest known decimal palindromic prime has 474,501 digits (found in 2014):
$$10^{474500} + 999 \cdot 10^{237249} + 1.$$
On the other hand, it is known that, for any base, almost all palindromic numbers are composite~\cite{Ba}. It means the ratio of palindromic composites and all palindromic numbers less than $n$ tends to $1$.

Binary palindromic primes include the Mersenne primes and the Fermat primes~\footnote{A Mersenne prime is a prime of the form $2^p-1$, where $p$ is a prime. A Fermat prime is a prime of the form $2^{2^n}+1$.}. All binary palindromic primes except the number $3$ (having the expansion $11$ in base $2$) have an odd number of digits; palindromic numbers with an even number of digits are divisible by $3$. Let us write down the sequence of binary expansions of the first binary palindromic primes (sequence A117697 in the OEIS~\cite{OEIS}):
$$11, 101, 111, 10001, 11111, 1001001, 1101011,$$
$$1111111, 100000001, 100111001, 110111011,\dots$$
\item {\em Multi-base palindromic numbers}: Any positive integer $n$ is palindromic in all bases $b$ with $b \geq n + 1$ because $n$ is then a single-digit number, and also in base $n-1$ because the expansion of $n$ in base $n-1$ equals $11$.
    But, it is more interesting to consider bases smaller than the number itself. For instance the number $105$ is palindromic in bases $4, 8, 14, 20, 34, 104$; the expansions of $105$ in those bases are:
    $$(105)_4=1221, \quad (105)_8=151, \quad (105)_{14}=77,$$
    $$(105)_{20}=55,\quad (105)_{34}=33, \quad (105)_{104}=11.$$
    A palindromic number in base $b$ whose expansion is made up of palindromic sequences of length $\ell$ arranged in a palindromic order is palindromic in base $b^{\ell}$. For example, the number $24253$ has the expansion in base $2$ equal to $(24253)_2=101\ 111\ 010\ 111\ 101$, i.e., it is made up of palindromes of length $3$, and its expansion in base $2^3=8$ is equal to $(24253)_8=57275$.
\item {\em Sum of palindromes}: Every positive integer can be written as the sum of at most three palindromic numbers in every number system with base $5$ or greater~\cite{Ci}.
    \end{enumerate}

In this paper, we deal with antipalindromic numbers in various integer bases. We examine and compare properties of palindromic numbers and antipalindromic numbers, bringing a number of new results. These are structured as follows. In Section \ref{sec:Definice}, we introduce the definition of an antipalindromic number in an integer base and its basic properties, following from the definition. Section \ref{sec:Primes} brings surprising results concerning divisibility and antipalindromic primes. In Section~\ref{sec:powers}, antipalindromic squares and higher powers are examined. Section~\ref{sec:Multibased} contains information about numbers that are antipalindromic in two or more bases at the same time. In Section~\ref{sec:OpenProblems}, we summarize our results and provide a list of conjectures and open problems.

\section{Definition and basic properties}\label{sec:Definice}
Let us start with a formal definition of palindromic and antipalindromic numbers and their basic properties.

\begin{defi}\label{def:pal_antipal}
Let $b \in \mathbb N, b \geq 2$. Consider a natural number $m$ whose expansion in base $b$ is of the following form
$$m=a_{n}b^{n}+\dots+a_{1}b+a_{0},$$
where $a_0, a_1, \dots, a_n \in \{0, 1, \dots, b-1\}, a_n \not =0$. We usually write $(m)_b=a_{n}\dots a_{1}a_{0}$.
Then $m$ is called
\begin{enumerate}
\item {\em a palindromic number} in base $b$ if its digits satisfy the condition:
\begin{equation}\label{eq:palindrom}
a_j=a_{n-j} \quad \text{for all $j \in \{0, 1, \dots, n\}$,}
\end{equation}
\item {\em an antipalindromic number} in base $b$ if its digits satisfy the condition:
\begin{equation}\label{eq:antipalindrom}
a_j=b-1-a_{n-j} \quad \text{for all $j \in \{0, 1, \dots, n\}$.}
\end{equation}
The length of the expansion of the number $ m $ is usually denoted $|m|$.
\end{enumerate}
\end{defi}

\begin{ex}
Consider distinct bases $ b $ and have a~look at antipalindromic numbers in these bases:
\begin{itemize}
\item $395406$ is an antipalindromic number in base $ b=10$.
\item $(1581)_3 = 2011120$, i.e., 1581 is an antipalindromic number in base $b=3$.
\item $(52)_2 = 110100$, i.e., 52 is an antipalindromic number in base $b=2$.
\end{itemize}
\end{ex}

\begin{prop}\label{thm:antipal_lichy_pocet_cifer}
If an antipalindromic number in base $b$ has an odd number of digits, then $b$ is an odd number and the middle digit is equal to $\frac{b-1}{2}$.
\end{prop}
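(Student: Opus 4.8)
The plan is to exploit the unique self-paired index in the antipalindromic condition~\eqref{eq:antipalindrom}. Since the expansion $(m)_b = a_n \dots a_1 a_0$ consists of $n+1$ digits, the assumption that this number is odd means that $n$ is even; I would write $n = 2k$ with $k \in \{0, 1, 2, \dots\}$. The reversal map $j \mapsto n-j$ on the index set $\{0, 1, \dots, n\}$ then has exactly one fixed point, namely $j = k$, and the corresponding digit $a_k$ is precisely the middle digit of the expansion.

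First I would specialize the defining equation~\eqref{eq:antipalindrom} to this fixed index $j = k$. Because $n - k = 2k - k = k$, the condition reads $a_k = b - 1 - a_k$, which rearranges to
\[
2a_k = b - 1.
\]
This already identifies the middle digit as $a_k = \tfrac{b-1}{2}$, giving the second assertion.

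To obtain the first assertion, I would note that $a_k$ is by definition one of the digits $\{0, 1, \dots, b-1\}$, hence a non-negative integer. Consequently $b - 1 = 2a_k$ is even, so $b$ is odd, as claimed. The whole argument is a single substitution, so I do not expect a genuine obstacle; the only step requiring a little care is the bookkeeping that the middle index $k$ is the unique fixed point of the reversal map, which exists exactly because the digit count $n+1$ is odd, together with the remark that it is the integrality of the digit $a_k$ that forces $b$ to be odd.
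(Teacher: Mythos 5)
Your proposal is correct and follows essentially the same route as the paper's own proof: both identify the middle digit as the unique self-paired index under the reversal $j \mapsto n-j$, deduce $2a_k = b-1$ from the antipalindromic condition, and conclude that $b$ must be odd with middle digit $\frac{b-1}{2}$. The only cosmetic difference is that the paper phrases this via pairing all digits and noting the leftover middle one, while you substitute directly into \eqref{eq:antipalindrom} at the fixed point.
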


\begin{proof}
Let us denote the digits of the considered antipalindromic number \\ $a_{0}, a_{1}, a_{2}, \dots, a_{2n}$. Pair the digits and add $ a_{0}+a_{2n}, a_{1}+a_{2n-1},\dots ,  a_{n-1}+a_{n+1}$. From the definition, each pair has a total of $b-1$. That leaves us with the digit $a_{n}$ that must be paired with itself: $ 2 a_n =b-1$.
\noindent Therefore, the digit $a_{n}$ is an integer only for $b=2k+1$, where ${k}\in\mathbb{N} $, i.e., for an odd $b$. Furthermore, $a_{n}=\frac{b-1}{2} $.
\end{proof}

\begin{prop}
A number is simultaneously palindromic and antipalindromic if and only if $b$ is an odd number and all the digits are equal to $ \frac{b-1}{2} $.
\end{prop}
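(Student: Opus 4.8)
The plan is to prove both implications directly from the defining conditions \eqref{eq:palindrom} and \eqref{eq:antipalindrom}, since the statement is an equivalence. Throughout I write the expansion as $(m)_b = a_n \dots a_1 a_0$ with digits $a_j \in \{0, 1, \dots, b-1\}$ and $a_n \neq 0$.

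For the forward direction, I would suppose $m$ is simultaneously palindromic and antipalindromic in base $b$, so that for every index $j \in \{0, 1, \dots, n\}$ both $a_j = a_{n-j}$ and $a_j = b - 1 - a_{n-j}$ hold. Substituting the first equality into the second immediately yields $a_j = b - 1 - a_j$, hence $2a_j = b - 1$ and $a_j = \frac{b-1}{2}$. Since this holds for every $j$, all digits coincide and equal $\frac{b-1}{2}$; and because each $a_j$ must be an integer, $b-1$ must be even, which forces $b$ to be odd.

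For the converse, I would suppose $b$ is odd and every digit equals $\frac{b-1}{2}$ (a legitimate digit, as $0 \le \frac{b-1}{2} \le b-1$), and then simply verify the two conditions: $a_j = \frac{b-1}{2} = a_{n-j}$ establishes \eqref{eq:palindrom}, while $b - 1 - a_{n-j} = b - 1 - \frac{b-1}{2} = \frac{b-1}{2} = a_j$ establishes \eqref{eq:antipalindrom}.

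I expect no real obstacle here: the crux is the single observation that imposing both symmetry conditions on the same pair of positions collapses them to $2a_j = b-1$. The only point needing a moment's care is the integrality of $\frac{b-1}{2}$, which is exactly what determines the parity of $b$ and mirrors the reasoning already used in Proposition~\ref{thm:antipal_lichy_pocet_cifer}.
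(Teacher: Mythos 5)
Your proof is correct and follows essentially the same route as the paper: combining the palindromic condition $a_j = a_{n-j}$ with the antipalindromic condition $a_j = b-1-a_{n-j}$ to force $2a_j = b-1$, hence $a_j = \frac{b-1}{2}$ and $b$ odd. The only difference is that you verify the converse explicitly, whereas the paper dismisses it as obvious; both are fine.
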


\begin{proof}
Consider an antipalindromic number with digits $ a_{0}, a_{1},\dots, a_{n}$. For this number to be palindromic, $ a_{j}=a_{n-j}$ must be true for each $j \in \{0, 1,  \dots, n\}$. From the definition of an antipalindromic number, it follows $ a_{j}+a_{n-j}=b-1$. For each $j \in \{0, 1 ,\dots , n\}$, we obtain $a_{j}=\frac{b-1}{2}$, i.e., all digits are equal to $\frac{b-1}{2}$ and the base $b$ must therefore be odd. The opposite implication is obvious.
\end{proof}

\section{Divisibility and antipalindromic primes}\label{sec:Primes}
Let us first study divisibility of antipalindromic numbers, which will be used in the sequel to show surprising results on antipalindromic primes.

\begin{lemma}\label{lem4}
Let $m$ be a natural number and its expansion in base $b$ be equal to $a_{n}b^n+a_{n-1}b^{n-1}+\ldots +a_{1}b+a_0$. Then $m$ is divisible by $b-1$ if and only if the sum of its digits is divisible by $b-1$, i.e., $a_n+a_{n-1}+\ldots +a_1+a_0 \equiv 0  \ (\mathrm{mod}\ b-1)$.
\end{lemma}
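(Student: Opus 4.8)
The plan is to work modulo $b-1$ and exploit the single elementary congruence that drives the whole result: $b \equiv 1 \pmod{b-1}$. Once this is in hand, raising to powers gives $b^k \equiv 1 \pmod{b-1}$ for every $k \geq 0$, since congruences are preserved under multiplication. This is the crux, and it is where I would spend the most care, though it is really just an induction on $k$ (or a direct appeal to $b^k - 1 = (b-1)(b^{k-1} + b^{k-2} + \cdots + 1)$, which exhibits $b-1$ as an explicit factor of $b^k - 1$).

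With that congruence established, the rest is a short computation. Starting from the expansion $m = a_n b^n + a_{n-1} b^{n-1} + \cdots + a_1 b + a_0$, I would reduce each term modulo $b-1$, replacing every power $b^k$ by $1$, to obtain
\begin{equation*}
m \equiv a_n + a_{n-1} + \cdots + a_1 + a_0 \pmod{b-1}.
\end{equation*}
In other words, $m$ and the digit sum $a_n + a_{n-1} + \cdots + a_0$ lie in the same residue class modulo $b-1$.

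The biconditional then follows immediately: $m \equiv 0 \pmod{b-1}$ holds precisely when the digit sum is $\equiv 0 \pmod{b-1}$, since the two quantities are congruent. Both implications are obtained at once from this single congruence, so there is no need to argue the two directions separately.

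I do not anticipate any genuine obstacle here; the only point requiring a line of justification is the step $b^k \equiv 1 \pmod{b-1}$, and even that is routine. The argument is entirely self-contained and does not rely on antipalindromicity — it is the general base-$b$ analogue of the familiar ``casting out nines'' rule — so it can be stated for an arbitrary natural number $m$ exactly as in the lemma.
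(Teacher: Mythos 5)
Your proof is correct and follows exactly the same route as the paper, whose one-line argument also rests on the fact that $b^k \equiv 1 \ (\mathrm{mod}\ b-1)$ for all $k$; you simply spell out the term-by-term reduction of the expansion that the paper leaves implicit. No gaps, nothing further needed.
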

\begin{proof}
The statement follows from the fact  \mbox{$b^k\equiv 1 \ (\mathrm{mod}\ b-1)$ for any $k \in \mathbb N$}.
\end{proof}

\begin{thm}\label{diviseven}
Any antipalindromic number with an even number of digits in base $b$ is divisible by $b-1$.
\end{thm}

\begin{proof}
Consider an antipalindromic number
\begin{equation*}
    m = a_{n}b^n+a_{n-1}b^{n-1}+\ldots +a_{1}b+a_0
\end{equation*}
for an odd $n$. From the definition, it is true that $a_j+a_{n-j}=b-1$ for each $j\in \{ 0,1,\ldots ,n\}$. The number of digits is even, hence
\begin{equation*}
    a_n+a_{n-1}+\ldots +a_1+a_0 = (b-1)\frac{n+1}{2}\equiv 0\  (\mathrm{mod}\ b-1).
\end{equation*}
Using Lemma \ref{lem4}, the antipalindromic number $m$ is divisible by $b-1$.
\end{proof}

\begin{thm}\label{divisodd}
An antipalindromic number with an odd number of digits in base $b$ is divisible by $\frac{b-1}{2}$.
\end{thm}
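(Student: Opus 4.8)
The plan is to mimic the digit-sum argument used for Theorem \ref{diviseven}, but to work modulo $\frac{b-1}{2}$ instead of $b-1$. First I would invoke Proposition \ref{thm:antipal_lichy_pocet_cifer}: since the antipalindromic number has an odd number of digits, the base $b$ is odd, so $\frac{b-1}{2}$ is in fact a positive integer, and moreover the middle digit is exactly $\frac{b-1}{2}$. This is precisely what makes the statement meaningful, and both facts will be used below.

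Next I would observe that Lemma \ref{lem4} rests only on the congruence $b^k \equiv 1 \ (\mathrm{mod}\ b-1)$. Since $\frac{b-1}{2}$ divides $b-1$, the same congruence descends, giving $b^k \equiv 1 \ (\mathrm{mod}\ \frac{b-1}{2})$ for every $k \in \mathbb N$. Writing the expansion with digits $a_0, a_1, \dots, a_{2n}$, so that there are $2n+1$ digits, this yields
\[
m \equiv \sum_{j=0}^{2n} a_j \pmod{\tfrac{b-1}{2}}.
\]

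Then I would evaluate the digit sum directly from the antipalindromic condition $a_j + a_{2n-j} = b-1$. Pairing $a_j$ with $a_{2n-j}$ for $j = 0, 1, \dots, n-1$ produces $n$ pairs, each summing to $b-1$, while the leftover middle digit $a_n$ equals $\frac{b-1}{2}$. Hence
\[
\sum_{j=0}^{2n} a_j = n(b-1) + \frac{b-1}{2} = \frac{b-1}{2}\,(2n+1),
\]
which is manifestly a multiple of $\frac{b-1}{2}$. Combining this with the congruence above gives $m \equiv 0 \ (\mathrm{mod}\ \frac{b-1}{2})$, establishing the claim.

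The argument is essentially routine once the correct modulus is identified; the only step demanding care is the one supplied by Proposition \ref{thm:antipal_lichy_pocet_cifer}, namely that $\frac{b-1}{2}$ is genuinely an integer and that the unpaired middle digit contributes exactly $\frac{b-1}{2}$ to the digit sum. I expect no real obstacle beyond this bookkeeping.
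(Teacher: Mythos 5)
Your proof is correct and takes essentially the same route as the paper: a digit-sum argument resting on the pairing $a_j + a_{2n-j} = b-1$ and on Proposition \ref{thm:antipal_lichy_pocet_cifer} for the middle digit $a_n = \frac{b-1}{2}$. The only cosmetic difference is in bookkeeping — the paper keeps Lemma \ref{lem4} at modulus $b-1$ by splitting $m$ into $(m - a_n b^n) + a_n b^n$ and noting each summand is divisible by $\frac{b-1}{2}$, whereas you descend the congruence $b^k \equiv 1$ to the modulus $\frac{b-1}{2}$ and evaluate the full digit sum at once; both arguments are sound and interchangeable.
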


\begin{proof}
Consider the antipalindromic number
\begin{equation*}
    m = a_{2n}b^{2n} + a_{2n-1}b^{2n-1} + \ldots + a_{1}b+a_0.
\end{equation*}
The digit sum of the number $m - a_{n}b^n$ is divisible by $b-1$. From Lemma \ref{lem4}, we also know that the number $m - a_{n}b^n$ itself is divisible by $b-1$ and, therefore, by $\frac{b-1}{2}$. From the definition, $a_{n}=\frac{b-1}{2}$. The number $m$ is a sum of two numbers divisible by $\frac{b-1}{2}$.
\end{proof}

Let us now turn our attention to antipalindromic primes. While palindromic primes occur in various bases, antipalindromic primes occur (except some trivial cases) only in base 3.

\begin{thm}
\label{jednoPrvocislo1}
Let base $b>3$. Then there exists at most one antipalindromic prime number $p$ in base $b$: $p=\frac{b-1}{2}$.
\end{thm}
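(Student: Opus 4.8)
The plan is to run a case analysis on the parity of the number of digits, and within the odd case on the parity of $b$, reducing everything to the two divisibility theorems just established. The guiding idea is that both Theorem~\ref{diviseven} and Theorem~\ref{divisodd} hand us a fixed divisor ($b-1$ or $\frac{b-1}{2}$) of any antipalindromic number; as soon as that divisor is nontrivial and strictly smaller than the number itself, primality is impossible. So the whole proof amounts to checking, branch by branch, that this divisor is a \emph{proper} divisor $>1$, and then isolating the single survivor.

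First I would dispose of the even-length case. Suppose $m$ is antipalindromic with an even number of digits; then $m$ has at least two digits, so $m \geq b$. By Theorem~\ref{diviseven}, $b-1 \mid m$, and since $b>3$ we have $1 < b-1 < b \leq m$, so $b-1$ is a proper nontrivial divisor of $m$ and $m$ is composite. Next I would treat the odd-length case. By Proposition~\ref{thm:antipal_lichy_pocet_cifer}, an odd number of digits forces $b$ to be odd; in particular, for even $b>3$ no odd-length antipalindrome exists, so combined with the even-length case there are no antipalindromic primes at all, which is consistent with $\frac{b-1}{2}$ not being an integer. For odd $b$, I would split off the one-digit subcase: the only single-digit antipalindrome satisfies $a_0 = b-1-a_0$, i.e. $a_0 = \frac{b-1}{2}$, which is exactly the claimed candidate $p=\frac{b-1}{2}$. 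If instead the odd number of digits is at least $3$, then $m \geq b^2$, and Theorem~\ref{divisodd} gives $\frac{b-1}{2} \mid m$. Since $b>3$ is odd we have $\frac{b-1}{2} \geq 2$ and $\frac{b-1}{2} < b \leq b^2 \leq m$, so again the divisor is proper and nontrivial, forcing $m$ to be composite.

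Collecting the cases, the only antipalindromic number in base $b>3$ that can possibly be prime is the one-digit number $\frac{b-1}{2}$, whence there is at most one antipalindromic prime, as claimed. The part to be careful with is the strict-inequality bookkeeping: in each branch one must confirm that the relevant divisor lies strictly between $1$ and $m$, and this is precisely where the hypothesis $b>3$ enters (it is exactly what guarantees $b-1>1$ and $\frac{b-1}{2}>1$). I do not anticipate a genuine obstacle here, since the real content is supplied by Theorems~\ref{diviseven} and~\ref{divisodd}; the remaining work is only to invoke the correct theorem per case and verify the divisor is proper.
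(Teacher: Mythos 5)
Your proof is correct and takes essentially the same route as the paper: both arguments reduce primality to the divisibility results of Theorems~\ref{diviseven} and~\ref{divisodd}, leaving $\frac{b-1}{2}$ as the only possible prime. Your write-up is just more explicit about the bookkeeping (the even-$b$ case, the single-digit candidate, and the size bounds showing the divisor is proper), whereas the paper compresses this into the remark that $b-1$, though possibly prime, is never antipalindromic.
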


\begin{proof}
Theorems \ref{diviseven} and \ref{divisodd} show that every antipalindromic number is divisible either by $\frac{b-1}{2}$ or $b-1$. Although $b-1$ may be a prime number, it is never antipalindromic.
\end{proof}

\begin{thm}
\label{jednoPrvocislo2}
Let base $b=2$. Then there exists only one antipalindromic number $p = 2$, $(p)_2=10$.
\end{thm}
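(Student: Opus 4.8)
The statement is to be read, in keeping with the preceding \cref{jednoPrvocislo1} and the fact that this part of the paper is devoted to antipalindromic primes, as the assertion that $p=2$ is the \emph{unique antipalindromic prime} in base $2$; without primality the claim cannot hold, since the paper's own examples (e.g.\ $(52)_2=110100$) exhibit other antipalindromic numbers in base $2$. The plan is to reduce everything to a single observation about the units digit, from which primality pins the answer down to $2$.

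First I would use that the leading digit of any base-$2$ expansion is nonzero and hence equal to $1$: in the notation of \cref{def:pal_antipal} this means $a_n=1$. Applying the antipalindromic condition \eqref{eq:antipalindrom} with $j=0$ then forces
\begin{equation*}
a_0 = b-1-a_n = 2-1-1 = 0 .
\end{equation*}
(The degenerate single-digit case $n=0$ is vacuous, as $a_0=1-a_0$ has no integer solution, so every antipalindromic number in base $2$ has $n\ge 1$.) Thus every antipalindromic number in base $2$ ends in the digit $0$ and is therefore even, i.e.\ divisible by $2$.

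From here the conclusion is immediate: an even positive integer is prime only when it equals $2$, so $2$ is the sole candidate for an antipalindromic prime in base $2$. It remains to verify that $2$ really is antipalindromic, which is clear from $(2)_2=10$: the digits $a_1=1,\ a_0=0$ satisfy $a_0=1-a_1$ and $a_1=1-a_0$. Hence $p=2$ is the unique antipalindromic prime in base $2$.

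I do not expect any genuine obstacle here. The only subtlety worth flagging is that the divisibility machinery developed above is of no use in this base: for $b=2$, \cref{diviseven} only yields divisibility by $b-1=1$, which is vacuous, while \cref{thm:antipal_lichy_pocet_cifer} excludes the odd-length case outright because $2$ is even. The efficient route is therefore the direct remark that antipalindromicity in base $2$ pins the units digit to $0$, so the entire content of the theorem is the observation that \emph{antipalindromic in base $2$} forces \emph{even}.
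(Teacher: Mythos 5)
Your proposal is correct and follows essentially the same route as the paper: the paper's proof also rests on the two observations that every antipalindromic number in base $2$ is even and that $2$ is the only even prime. You merely spell out the details the paper leaves implicit (the leading digit $1$ forces the trailing digit $0$ via \eqref{eq:antipalindrom}, the single-digit case is impossible, and $(2)_2=10$ is indeed antipalindromic), together with the same necessary reading of the statement as concerning antipalindromic \emph{primes}.
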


\begin{proof}
Every antipalindromic number in base $b=2$ is even. $2$ is the only even prime number.
\end{proof}

\begin{thm}
Let base $b=3$. Every antipalindromic prime in this base has an odd number of digits $n\geq 3$.
\end{thm}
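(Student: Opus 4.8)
The plan is to break the statement into two independent parts — that the number of digits is odd, and that it cannot equal $1$ — and to derive each from results already proved in the excerpt, so the argument stays short.

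First I would dispose of the even-length case. By Theorem~\ref{diviseven}, an antipalindromic number with an even number of digits in base $b=3$ is divisible by $b-1=2$, hence even. I would then observe that every such number is strictly larger than $2$: the shortest even-length antipalindromes in base $3$ have two digits, and since the leading digit is nonzero with $a_1+a_0 = 2$, the only candidates are $(11)_3 = 4$ and $(20)_3 = 6$. Consequently an even-length antipalindrome is always an even number exceeding $2$, therefore composite, and so an antipalindromic prime in base $3$ must have an odd number of digits.

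Next I would eliminate the single-digit case. A one-digit antipalindrome must satisfy condition~\eqref{eq:antipalindrom}, which gives $a_0 = b-1-a_0$, i.e. $2a_0 = 2$ and $a_0 = 1$; the only one-digit antipalindrome in base $3$ is thus $(1)_3 = 1$, which is not prime. Combining the two parts, the number of digits of an antipalindromic prime in base $3$ is odd and different from $1$, hence at least $3$, as claimed. (The middle-digit value $\tfrac{b-1}{2}=1$ guaranteed by Proposition~\ref{thm:antipal_lichy_pocet_cifer} for odd lengths is consistent with this, but is not needed for the argument.)

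There is no genuine obstacle here; the only point that demands a moment's care is the passage from \emph{even} to \emph{composite} in the even-length case — one must record that the smallest even-length antipalindrome is $4$, so that divisibility by $2$ really does force compositeness and the exceptional even prime $2$ never arises. Everything else is a direct application of Theorem~\ref{diviseven} together with the definition of an antipalindrome.
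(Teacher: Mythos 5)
Your proof is correct and follows essentially the same route as the paper: Theorem~\ref{diviseven} handles the even-length case, and a direct check of condition~\eqref{eq:antipalindrom} shows that $1$ is the only single-digit antipalindrome in base $3$. You are in fact slightly more careful than the paper's own proof, which leaves implicit the observation that every even-length antipalindrome exceeds $2$, so that divisibility by $2$ really does rule out primality.
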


\begin{proof}
From Theorem \ref{diviseven}, antipalindromic numbers with an even number of digits in base $b=3$ are even. The only antipalindromic number in this base with one digit is 1.
\end{proof}

\begin{lemma}\label{lem:div3}
Antipalindromic numbers in base $b=3$ beginning with a digit $2$ are divisible by $3$.
\end{lemma}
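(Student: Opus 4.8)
The plan is to exploit the antipalindromic relation from Definition~\ref{def:pal_antipal} together with the trivial observation that a base-$3$ number is divisible by $3$ exactly when its least significant digit is $0$. Write the number as $m = a_n b^n + a_{n-1}b^{n-1} + \ldots + a_1 b + a_0$ with $b=3$, where ``beginning with the digit $2$'' means that the most significant digit satisfies $a_n = 2$.

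First I would apply condition~\eqref{eq:antipalindrom} at the index $j=0$, which reads $a_0 = b-1-a_n = 3-1-2 = 0$. Thus the antipalindromic symmetry immediately forces the least significant digit to vanish. Second, I would observe that reducing the base-$3$ expansion modulo $3$ kills every term except the constant one, since $b^k \equiv 0 \ (\mathrm{mod}\ 3)$ for every $k \geq 1$; hence $m \equiv a_0 \ (\mathrm{mod}\ 3)$. Substituting $a_0 = 0$ gives $m \equiv 0 \ (\mathrm{mod}\ 3)$, which is exactly the claimed divisibility.

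The argument is essentially immediate, so I do not expect a genuine obstacle; the only point requiring care is the orientation of the antipalindromic relation, namely being sure that fixing the \emph{leading} digit $a_n=2$ forces the \emph{trailing} digit $a_0=0$ rather than the other way around, and then noting that divisibility by $3$ (the base itself) is what a trailing zero provides. One could alternatively phrase the conclusion through Lemma~\ref{lem4}, but here the direct reduction modulo the base is cleaner, since we only need the constant term.
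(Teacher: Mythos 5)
Your proposal is correct and matches the paper's proof: both use the antipalindromic condition to force $a_0 = 3-1-a_n = 0$, and then note that every remaining term of the expansion is a multiple of $3$. The reduction-modulo-$3$ phrasing is just a restatement of the paper's observation that all summands are divisible by $3$.
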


\begin{proof}
Consider an antipalindromic number $m=a_n3^n+a_{n-1}3^{n-1}+\ldots+a_13+a_0$, where $a_n=2$. The sum of $a_n$ and $a_0$ need to be equal to $2$, therefore $a_0=0$. All the summands are divisible by 3.
\end{proof}

\begin{thm}
All antipalindromic primes in base $b=3$ can be expressed as $6k+1$, where $k \in \mathbb N$.
\end{thm}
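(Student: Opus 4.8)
The plan is to pin down the residue of such a prime $p$ modulo $2$ and modulo $3$ separately, and then combine the two congruences via the Chinese Remainder Theorem to land in the class $1 \pmod 6$. First I would record the easy preliminaries: by the preceding theorem every antipalindromic prime in base $3$ has at least three digits, so its value is at least $(111)_3 = 13$; in particular $p > 3$. Being a prime exceeding $2$, the number $p$ is odd, which gives $p \equiv 1 \pmod 2$ for free. It remains only to show $p \equiv 1 \pmod 3$.

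For the modulo $3$ statement I would argue through the leading and trailing digits. Write $(p)_3 = a_n \cdots a_1 a_0$. The leading digit $a_n$ is nonzero, hence $a_n \in \{1,2\}$. If $a_n = 2$, then Lemma~\ref{lem:div3} forces $3 \mid p$, which is impossible for a prime larger than $3$; therefore $a_n = 1$. The antipalindrome condition \eqref{eq:antipalindrom} then yields $a_0 = b - 1 - a_n = 3 - 1 - 1 = 1$. Since $p \equiv a_0 \pmod 3$ (every other term carries a factor of $3$), we conclude $p \equiv 1 \pmod 3$. This step — ruling out the leading digit $2$ and reading off the forced trailing digit $1$ — is really the only place any content enters, and it is handled entirely by Lemma~\ref{lem:div3} together with the definition.

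Finally I would combine $p \equiv 1 \pmod 2$ and $p \equiv 1 \pmod 3$ by the Chinese Remainder Theorem to obtain $p \equiv 1 \pmod 6$, i.e. $p = 6k + 1$ for some integer $k$. Because $p \geq 13$ we have $k \geq 2$, so $k \in \mathbb{N}$ as claimed. I do not anticipate a genuine obstacle here: once the earlier divisibility results are in hand the argument is a short reduction, and the only care needed is to confirm that the small cases are excluded (which the lower bound $p \geq 13$ does automatically).
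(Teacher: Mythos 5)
Your proof is correct, and it takes a partly different route from the paper's. The modulo-$3$ half is essentially the same: like the paper, you invoke Lemma~\ref{lem:div3} to rule out a leading digit $2$ (impossible for a prime exceeding $3$), which forces the leading digit to be $1$ and hence, by condition \eqref{eq:antipalindrom}, the trailing digit $a_0=1$, so $p\equiv 1\pmod 3$. The difference lies in how the factor $2$ is obtained. You get parity for free from primality ($p$ is a prime greater than $2$, hence odd) and finish by combining the two congruences. The paper instead pairs the symmetric digits $a_{2n-j}3^{2n-j}+a_j3^j$ and checks the three cases $(a_{2n-j},a_j)\in\{(2,0),(1,1),(0,2)\}$ to see that each pair contributes a multiple of $6$, leaving $m=3^{2n}+3^n+1+6\ell$ with $3^n(3^n+1)$ divisible by $6$. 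Your argument is shorter and avoids that bookkeeping (indeed, you never need the odd digit count or the middle digit at all, only that $p$ has at least three digits so that $p>3$); the paper's computation, on the other hand, uses primality only to constrain the leading digit, so it implicitly establishes a stronger structural fact: \emph{every} antipalindromic number in base $3$ with an odd number of digits and leading digit $1$ is congruent to $1$ modulo $6$, prime or not. Both proofs are valid; yours trades that extra generality for brevity.
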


\begin{proof}
Consider an antipalindromic prime $m=a_{2n}3^{2n}+a_{2n-1}3^{2n-1}+\ldots +a_13+a_0$. (The number of digits must be odd.) From Lemma \ref{lem:div3}, $a_0$ is equal to $1$. Let us pair the digits of the antipalindromic number $m$ (except for $a_{2n}$, $a_n$, and $a_0$): $a_{2n-j}3^{2n-j} + a_j3^j, j \in \{1,\ldots ,n-1\}.$ Let us prove that for each $j\in \{1,\ldots ,n-1\}$ there exists $s \in \mathbb N$ satisfying
\begin{equation*}
    3^j( a_{2n-j}3^{2n-2j} + a_j) = 6s.
\end{equation*}
We can only consider three possibilities: $a_{2n-j}=2,a_j=0$, or $a_{2n-j}=a_j=1$, or $a_{2n-j}=0,a_j=2$. In either case, the equation holds because there is an even number inside the bracket. We then get
\begin{align*}
m\  &=\  a_{2n}3^{2n}+a_n3^n+a_0+6\ell\\
\ &= \ 3^{2n}+3^n+1+6\ell\\
\ &= \ 3^n(3^n+1)+1+6\ell
\end{align*}
for some $\ell \in \mathbb N$. The first summand is also divisible by $6$, therefore $m$ can indeed be expressed as $6k+1$ for some $k\in \mathbb N$.

\end{proof}

The application~\cite{Kr} can be used for searching antipalindromic primes in base 3. During an extended search, the first 637807 antipalindromic primes have been found. Let us now list at least the first 10 of them, along with their expansions in base 3:
\begin{align*}
    13&\  &\ 111\\
    97&\  &\ 10121\\
    853&\  &\ 1011121\\
    1021&\  &\ 1101211\\
    1093&\  &\ 1111111\\
    7873&\  &\ 101210121\\
    8161&\  &\ 102012021\\
    8377&\  &\ 102111021\\
    9337&\  &\ 110210211\\
    12241&\  &\ 121210101
\end{align*}

\section{Squares and other powers as antipalindromes}\label{sec:powers}

For palindromic numbers, squares and higher powers were considered in \cite{Square1,Square2} by Simmons more than thirty years ago. He proved that there were infinitely many palindromic squares, cubes, and biquadrates. However, his conjecture was that for $k>4, k\in \mathbb N$, no integer $m$ exists such that $m^k$ is a palindromic number (in the decimal base). This conjecture is still open. That is definitely not the case for antipalindromic numbers as $3^7=2187$ is antipalindromic in base 10.\par
Let us answer the following question:
\begin{ot}
Are there any antipalindromic integer squares?
\end{ot}

Our initial observation suggested that bases $b=n^2+1,n\in \mathbb N$, have the most antipalindromic squares and the computer application provided additional insight needed to prove this observation not only for squares but for other powers as well. Table~\ref{table:squares} expresses the number of antipalindromic squares smaller than $10^{12}$ in bases $n^2, n^2+1,$ and $n^2+2$ to underline the differences between the bases of the form $n^2+1$ and the others.

\begin{table}[ht]
\centering
\begin{tabular}{|c|cccccc|}

\hline
base                   & n=20 & 21 & 22 & 23 & 24 & 25  \\ \hline
$n^2$   & 3    & 13 & 3  & 14 & 9  & 11   \\
$n^2+1$ & 47   & 44 & 48 & 53 & 55 & 57  \\
$n^2+2$ & 2    & 2  & 2  & 2  & 2  & 1  \\ \hline
\end{tabular}
\caption{Number of antipalindromic squares smaller than $10^{12}$ in particular bases}\label{table:squares}
\end{table}

As the exponent is raised, the differences become even more significant, but the numbers rise faster, see Table~\ref{table:biquadrates}.
\begin{table}[ht]
\centering
\begin{tabular}{|c|ccccccccc|}

\hline
base                   & n=4 & 5 & 6 & 7 & 8 & 9 & 10&11&12 \\ \hline
$n^4$   & 0    & 1 & 0  & 1 & 0  & 1 & 0&0&0  \\
$n^4+1$ & 6   & 6 & 8 & 10 & 13 & 13 & 13&13&13 \\
$n^4+2$ & 0    & 0  & 0  & 0  & 0  & 0  & 0 &0&0\\ \hline
\end{tabular}
\caption{Number of antipalindromic biquadrates smaller than $10^{15}$ in particular bases}\label{table:biquadrates}
\end{table}

\begin{ex} Consider the base $b=10=3^2+1$. Any antipalindromic number in this base must be divisible by 9.
Every double-digit number divisible by 9 (except 99) is antipalindromic:
\begin{equation*}
18,27,36,45,54,63,72,81,90.
\end{equation*}
The number 9 is a square, so if a square is divided by 9, it still is a square. \\
\begin{equation*}
36=4\cdot 9 = 2^2\cdot 3^2 = 6^2,\quad 81=9\cdot 9 = 9^2.
\end{equation*}
Thus $36$ and $81$ are antipalindromic squares.
\end{ex}

\begin{prop}
For $b=n^2+1,  n\in \mathbb N$, and $m \in \{ 2,3,\ldots ,n\} $, the number $(m\cdot n)^2$ is antipalindromic.
\end{prop}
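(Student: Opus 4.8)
The plan is to exploit the defining relation $b=n^2+1$, which gives the crucial identity $n^2=b-1$. Rewriting the target, I would compute
\begin{equation*}
(m\cdot n)^2 = m^2 n^2 = m^2(b-1) = m^2 b - m^2,
\end{equation*}
and then massage this into a genuine base-$b$ expansion. The natural regrouping is
\begin{equation*}
m^2 b - m^2 = (m^2-1)\,b + (b-m^2),
\end{equation*}
so I expect $(m\cdot n)^2$ to be a two-digit number in base $b$ with leading digit $a_1=m^2-1$ and trailing digit $a_0=b-m^2$.

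First I would establish that this regrouping really is the base-$b$ expansion, i.e.\ that both $a_1$ and $a_0$ lie in $\{0,1,\dots,b-1\}$ and that the leading digit $a_1$ is nonzero. This is where the hypothesis $m\in\{2,3,\dots,n\}$ does all the work, and it is the step that needs the most care. The upper bound $m\le n$ gives $m^2\le n^2<n^2+1=b$, which forces $a_1=m^2-1<b$ and also $a_0=b-m^2\ge 1>0$; the lower bound $m\ge 2$ gives $a_1=m^2-1\ge 3>0$, so the expansion genuinely has length two. (As a consistency check one can note $b\le 4n^2\le (m n)^2\le n^4<b^2$, confirming exactly two digits.)

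Having fixed the digits, the antipalindrome condition from Definition~\ref{def:pal_antipal} for a two-digit word $a_1a_0$ reduces to the single equality $a_0=b-1-a_1$. I would verify this by direct substitution:
\begin{equation*}
b-1-a_1 = b-1-(m^2-1) = b-m^2 = a_0,
\end{equation*}
which closes the argument. The computational part is entirely routine; the only genuine obstacle is the range verification, and that obstacle is precisely what pins down why the statement is restricted to $2\le m\le n$ rather than to all positive $m$.
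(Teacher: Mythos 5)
Your proposal is correct and follows essentially the same route as the paper: both rewrite $(mn)^2=m^2(b-1)$ and identify the base-$b$ expansion as $(m^2-1)\,(b-m^2)$, which visibly satisfies the antipalindromic condition. The only difference is that you carefully verify the digit bounds coming from $2\le m\le n$, a step the paper leaves implicit.
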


\begin{proof}

Since $b=n^2+1$, we can modify the expression as follows: \\ $(m\cdot n)^2 = m^2\cdot (b-1)$.
This number has the expansion in base $b$ equal to $(m^2-1)\ (b-m^2)$, hence it is antipalindromic.
\end{proof}

\begin{ot}
Are there any higher integer powers that are also antipalindromic numbers?
\end{ot}

\begin{ex} Consider the base $b=28=3^3+1$. Any antipalindromic number in this base with an even number of digits must be divisible by 27.
Every double-digit number divisible by 27 (except the one with expansion $(27)(27)$) is antipalindromic.

The number 27 is a third power of 3, so if a third power of any number is divided by 27, it still is a third power of an integer.
$$(7) (20) = (216)_{28}\ \text{and} \ 216 = 8\cdot 27 = 2^3 \cdot 3^3 = 6^3,$$
$$(26) (1) = (729)_{28} \ \text{and} \ 729= 27\cdot 27 = 3^3\cdot 3^3 = 9^3.$$

Thus $216$ and $729$ are antipalindromic cubes.
\end{ex}

\begin{thm}
\label{nk+1}
For $b=n^k+1$, where $n,k\in \mathbb N, k\geq 2$, and $m \in \{2,3,\ldots ,n\}$, the number $(m\cdot n)^k$ is antipalindromic.
\end{thm}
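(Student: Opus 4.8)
The plan is to write down the base-$b$ expansion of $(m\cdot n)^k$ explicitly and observe that it is a two-digit antipalindrome, exactly mirroring the proof of the preceding proposition, which is the special case $k=2$. The key algebraic fact is that $b-1=n^k$, so the power factors cleanly as $(m\cdot n)^k = m^k n^k = m^k(b-1)$; that is, $(m\cdot n)^k$ is just $m^k$ copies of $b-1$.

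First I would rewrite $m^k(b-1) = m^k b - m^k = (m^k-1)b + (b-m^k)$, which identifies the two candidate digits as $a_1 = m^k-1$ and $a_0 = b-m^k$. The only point requiring care is to confirm that these really are legitimate base-$b$ digits, i.e. that they lie in $\{0,1,\ldots,b-1\}$ and that the leading digit $a_1$ is nonzero (so that the expansion genuinely has exactly two digits). This is precisely where the hypothesis $m\in\{2,3,\ldots,n\}$ enters: from $m\le n$ we get $m^k\le n^k = b-1$, hence $a_0 = b-m^k\ge 1$ and $a_1 = m^k-1\le n^k-1 < b-1$; from $m\ge 2$ we get $a_1 = m^k-1\ge 2^k-1>0$. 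A quick bound also shows $(m^k-1)b+(b-m^k)<b^2$, so there is no higher-order digit.

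Finally I would verify the antipalindrome condition. For a two-digit number in base $b$ it suffices that the two digits sum to $b-1$, and indeed $a_0+a_1 = (b-m^k)+(m^k-1) = b-1$. Hence $(m\cdot n)^k$ has the two-digit antipalindromic expansion $(m^k-1)\,(b-m^k)$ in base $b$, as claimed. (One can sanity-check this against the cube example: for $b=28=3^3+1$, $n=3$, $m=2$ gives $a_1=7$, $a_0=20$, matching $(216)_{28}$, and $m=3$ gives $a_1=26$, $a_0=1$, matching $(729)_{28}$.)

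I do not expect a genuine obstacle here: the argument is the $k=2$ proof with $2$ replaced by $k$ throughout, and the factorization $(m\cdot n)^k = m^k(b-1)$ does all the work. The only step one must not skip is the digit-range verification, since the antipalindrome property is meaningless unless $(m^k-1)\,(b-m^k)$ is a valid expansion — and it is exactly the constraint $m\le n$ that keeps both digits within range.
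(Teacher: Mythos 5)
Your proposal is correct and follows essentially the same route as the paper's own proof: the factorization $(m\cdot n)^k = m^k\cdot(b-1)$ and the resulting two-digit expansion $(m^k-1)\,(b-m^k)$ in base $b$. The only difference is that you spell out the digit-range verification (that $m\le n$ forces both digits into $\{0,1,\ldots,b-1\}$ with a nonzero leading digit), which the paper leaves implicit.
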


\begin{proof}
Since $b=n^k+1$, we can modify the expression as follows: \\ $(m\cdot n)^k = m^k\cdot (b-1)$.
This number has the expansion in base $b$ equal to $(m^k-1) (b-m^k)$, thus it is antipalindromic.
\end{proof}
For odd powers and high enough bases, other patterns exist.

%\begin{lemma}
% For each $m > 1$, there exists a number $c$ such that in every base $b> c$, the following number is antipalindromic:
%\begin{equation*}
%[m\cdot (b-1)]^3.
%\end{equation*}
%It suffices to put  $c=3\cdot m^3 - 1$.
%\end{lemma}
%
%\begin{proof} Let us write down the expansion of $[m\cdot (b-1)]^3$:
%$$([m\cdot(b-1)]^3)_b  =   (m^3-1) \quad (b-3\cdot m^3)\quad (3\cdot m^3-1)\quad (b-m^3).$$
%\end{proof}

\begin{thm}
 For integers $m>1$ and odd $k>1$, there exists a number $c$ such that in every base $b\geq c$, the following number is antipalindromic:
 \begin{equation*}
     [m\cdot (b-1)]^k.
 \end{equation*}
 It suffices to put $c = \binom{k}{\frac{k-1}{2}}\cdot m^k$.
\end{thm}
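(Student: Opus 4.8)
The plan is to compute the base-$b$ expansion of $N=[m(b-1)]^k = m^k(b-1)^k$ explicitly and then read off the antipalindrome condition. First I would apply the binomial theorem to write
$$N = m^k\sum_{i=0}^k \binom{k}{i}(-1)^{k-i}b^i,$$
and, using that $k$ is odd (so that $(-1)^{k-i}=(-1)^{i+1}$), record the ``pseudo-digit'' $c_i = m^k\binom{k}{i}(-1)^{i+1}$ sitting at position $i$. These alternate in sign, being negative at even positions and positive at odd positions, and in general they are not valid digits; so the heart of the argument is to carry them into the admissible range $\{0,1,\dots,b-1\}$.

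Next I would perform the carrying from the least significant position upward and prove, by a short induction on the position, that the carry out of each even position is $-1$ while the carry out of each odd position is $0$. Concretely, at an even position $2j$ the incoming carry is $0$, so the value is $-m^k\binom{k}{2j}$, which yields digit $d_{2j}=b-m^k\binom{k}{2j}$ and carry $-1$; at an odd position $2j+1$ the incoming carry is $-1$, so the value is $m^k\binom{k}{2j+1}-1$, which yields digit $d_{2j+1}=m^k\binom{k}{2j+1}-1$ and carry $0$. The hypothesis $b\ge c$ enters precisely here: each digit lands in the required range exactly when $m^k\binom{k}{i}\le b$ for every $i$, and since $\max_i\binom{k}{i}=\binom{k}{(k-1)/2}$, this is guaranteed by $b\ge c=\binom{k}{(k-1)/2}m^k$. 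I would also note that the carry out of the top position $k$ is $0$ and that the leading digit $d_k=m^k-1\ge 1$ is nonzero, so that $N$ has exactly $k+1$ digits.

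Finally I would verify the antipalindrome condition. Since $k$ is odd, positions $i$ and $k-i$ have opposite parity, so pairing an even position $2j$ with the odd position $k-2j$ and using $\binom{k}{k-2j}=\binom{k}{2j}$ gives
$$d_{2j}+d_{k-2j} = \bigl(b-m^k\tbinom{k}{2j}\bigr)+\bigl(m^k\tbinom{k}{2j}-1\bigr)=b-1,$$
which is exactly condition \eqref{eq:antipalindrom} for every pair. I expect the main obstacle to be the bookkeeping in the carrying step, namely checking that the alternating $-1/0$ carry pattern is self-consistent across all positions and that no digit slips outside $\{0,\dots,b-1\}$, rather than the closing symmetry check, which becomes immediate once the closed-form digits are in hand.
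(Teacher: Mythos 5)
Your proof is correct and takes essentially the same approach as the paper: expand $[m(b-1)]^k$ by the binomial theorem, convert the alternating coefficients into the digit pattern $m^k\binom{k}{2j+1}-1$ at odd positions and $b-m^k\binom{k}{2j}$ at even positions, observe that $b\ge\binom{k}{\frac{k-1}{2}}\cdot m^k$ keeps every digit in $\{0,\dots,b-1\}$, and pair positions $i$ and $k-i$ to get digit sums $b-1$. Your explicit induction on the carries is in fact more careful than the paper's own proof, which simply asserts the resulting expansion without spelling out the carrying or the final pairing check.
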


\begin{proof}
The binomial theorem reads
$$[m\cdot (b-1)]^k = m^k\cdot\sum_{i=0}^{k}(-1)^i\cdot \binom{k}{i}\cdot b^{k-i}. $$\\
Since $\binom{k}{\frac{k-1}{2}}$ is the maximum number  among $\binom{k}{i}$ for $i \in \{0,1,\dots ,k\}$, the expansion in base $b$ equals:
$$\left([m\cdot (b-1)]^k\right)_b =
\left(m^k\cdot\binom{k}{0}-1\right) \left(b-m^k\cdot\binom{k}{1}\right)\dots$$
$$\dots\left(m^k\cdot\binom{k}{k-1}-1\right)
\left(b-m^k\cdot\binom{k}{k}\right).$$
\end{proof}

\section{Multi-base antipalindromic numbers}\label{sec:Multibased}
Let us study the question whether there are numbers that are antipalindromic simultaneously in more bases. In his 2010 paper~\cite{Basic}, Ba\v si\'c showed that for any list of bases, there exists a number with palindromic expansions in each of the bases. This does not necessarily apply to antipalindromic numbers as there exist sets of bases (e.g., 6 and 8) for which our application mode for searching multi-base antipalindromic numbers was unable to find any simultaneous antipalindromic numbers.\par

In 2014, B\'erczes and Ziegler~\cite{Simul} discussed multi-base palindromic numbers and proposed a list of the first 53 numbers palindromic in bases $2$ and $10$ simultaneously. Our application~\cite{Kr} has only been able to find one number with an antipalindromic expansion in these bases. This number, $3276$, is also antipalindromic in other 19 distinct bases, see Table~\ref{table:multibase}. The next greater number that is antipalindromic both in base $2$ and $10$ must be greater than $10^{10}$ and divisible by 18. \par

It is not uncommon for a number to be antipalindromic in more bases. In this section, we show that if a number is antipalindromic in a unique base, then the number must be prime or equal to 1, see Theorem \ref{multithm}. \\

\begin{table}[ht]
\centering
\begin{tabular}{|c|c|}

\hline
base                   & expansion \\ \hline
$2$ & 110011001100 \\
$4$ & 303030 \\
$10$ & 3276\\
$64$ & 53 10\\
$79$ & 41 37\\
$85$ & 38 46\\
$92$ & 35 56\\
$118$ & 27 90\\
$127$ & 25 101\\
$157$ & 20 136\\
$183$ & 17 165\\
$235$ & 13 221\\
$253$ & 12 240\\
$274$ & 11 262\\
$365$ & 8 356\\
$469$ & 6 462\\
$547$ & 5 541\\
$820$ & 3 816\\
$1093$ & 2 1090\\
$1639$ & 1 1637\\
$6553$ & 3276\\ \hline
\end{tabular}

\caption{Antipalindromic expansions of the number $3276$ in $21$ bases}\label{table:multibase}
\end{table}

\begin{defi} An antipalindromic number is called {\em multi-base} if it is antipalindromic in at least two different bases.
\end{defi}

\begin{poz}
Every number $m \in \mathbb N$ is antipalindromic in base $2m+1$.
\end{poz}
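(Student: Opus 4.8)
The plan is to observe that in base $b = 2m+1$ the number $m$ has a particularly simple expansion, and then to check that expansion directly against the defining condition \eqref{eq:antipalindrom}. Since $m \geq 1$, we have $0 \le m < 2m+1 = b$, so $m$ is strictly smaller than the base. Hence its base-$b$ expansion consists of the single digit $a_0 = m$, i.e., $(m)_b = m$ with $n = 0$ in the notation of Definition \ref{def:pal_antipal}.

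It then remains to verify the antipalindromic condition \eqref{eq:antipalindrom} for this one-digit expansion. Specializing $a_j = b - 1 - a_{n-j}$ to $n = 0$ and $j = 0$, the condition becomes $a_0 = b - 1 - a_0$, equivalently $2a_0 = b - 1$, i.e., $a_0 = \frac{b-1}{2}$. Substituting $b = 2m+1$ gives $\frac{b-1}{2} = \frac{2m}{2} = m = a_0$, so the condition holds and $m$ is antipalindromic in base $2m+1$.

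I expect no genuine obstacle here: the statement is an immediate consequence of the definition once one notices that $m$ is a single digit in this base. The only point worth flagging is the edge behaviour---the argument uses $m \geq 1$ so that $b \geq 3$ and the expansion is genuinely single-digit with $a_0 \neq 0$, consistent with the convention $a_n \neq 0$ in Definition \ref{def:pal_antipal}. The observation also dovetails with Proposition \ref{thm:antipal_lichy_pocet_cifer}: the base $2m+1$ is odd and the sole (middle) digit equals $\frac{b-1}{2} = m$, exactly as that proposition predicts for antipalindromes of odd length.
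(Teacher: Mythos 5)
Your proof is correct and is essentially the paper's intended argument: the paper states this as an unproved Observation (and uses the same fact implicitly in the proof of Theorem~\ref{multithm}, where $(a)_{2a+1}=a$ is listed as an antipalindromic expansion), and the evident justification is exactly your single-digit computation $a_0 = m = \frac{b-1}{2}$. Your remarks on the edge case $m \geq 1$ and the consistency with Proposition~\ref{thm:antipal_lichy_pocet_cifer} are accurate but not needed beyond that.
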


\begin{ex}
The number 3276 is a multi-base antipalindromic number, as illustrated in Table~\ref{table:multibase}.
\end{ex}

\begin{thm}\label{multithm}For any composite number $a\in \mathbb N$, we can find at least two bases $b,c$ such that this number has an antipalindromic expansion in both of them.
\end{thm}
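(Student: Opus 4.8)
The plan is to exhibit two distinct bases explicitly. One base comes essentially for free from the Observation above: since $a$ occupies a single digit in base $2a+1$, and a one-digit string $a_0$ is antipalindromic precisely when $a_0=(b-1)/2$, the number $a$ is antipalindromic in base $2a+1$. The entire difficulty is therefore to produce a \emph{second} base, and for this I would look for a base in which $a$ has a two-digit antipalindromic expansion.

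First I would determine which numbers admit a two-digit antipalindromic expansion in a given base $b$. A two-digit string $a_1a_0$ is antipalindromic exactly when $a_1+a_0=b-1$, so its value is
\begin{equation*}
a_1 b + a_0 = a_1 b + (b-1-a_1) = (a_1+1)(b-1).
\end{equation*}
Thus the two-digit antipalindromes in base $b$ are precisely the multiples $(a_1+1)(b-1)$ with $a_1\in\{1,\dots,b-1\}$, in agreement with Theorem~\ref{diviseven}. Consequently, to realize a given $a$ as a two-digit antipalindrome it suffices to choose the base so that $b-1$ is a divisor of $a$ whose cofactor lands in the admissible range of $a_1+1$.

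This is exactly where compositeness enters. Writing $a=pq$ with $2\le p\le q$, I would set $b=q+1$, so that $b-1=q$ divides $a$ with cofactor $p$. Matching $(a_1+1)(b-1)=pq$ forces $a_1=p-1$ and $a_0=b-1-a_1=q-p+1$, giving the expansion $(a)_{q+1}=(p-1)(q-p+1)$. It then remains to check the digit constraints: the leading digit $a_1=p-1$ is at least $1$ because $p\ge 2$, and $a_0=q-p+1\ge 0$ because $p\le q$; both digits are clearly at most $b-1=q$. Hence $a$ is antipalindromic in base $q+1$.

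Finally I would confirm that the two bases are genuinely different: the equality $q+1=2a+1$ would force $q=2pq$, which is impossible, so $q+1\ne 2a+1$. I expect the only real obstacle to be the bookkeeping on the digit bounds, and it is precisely the inequality $p\le q$ coming from the factorization that guarantees $a_0\ge 0$; this is the step that fails for primes and explains why compositeness is exactly the hypothesis needed.
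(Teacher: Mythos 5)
Your proof is correct and takes essentially the same route as the paper: both use the single-digit expansion of $a$ in base $2a+1$ together with the two-digit expansion $(p-1)\,(q-p+1)$ in base $q+1$ obtained from a factorization $a=pq$ with $2\le p\le q$. Your explicit verification of the digit bounds and of the distinctness of the two bases merely spells out details the paper leaves implicit.
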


\begin{proof}
Assume that $a= m\cdot n,\ m,n \in \mathbb N, \  m\geq n\geq 2,$
set $b= \frac{a}{n}+1, \
c = 2a+1$.
The expansions of $a$ in bases $b,c$ are equal to:
\begin{align*}
    (a)_{b}&= (n-1) (\tfrac{a}{n}-n+1),\\
    (a)_{c}&= a.
\end{align*}
\end{proof}

\begin{thm}
For every $n\in \mathbb N$, there exist infinitely many numbers that are antipalindromic in at least $n$ bases.
\end{thm}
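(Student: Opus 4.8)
The plan is to leverage the previous theorem in an iterated fashion. Theorem~\ref{multithm} already guarantees that any composite number is antipalindromic in at least two bases, so the natural strategy is to construct numbers that factor in many different ways, thereby producing many different antipalindromic bases. First I would recall the construction from the proof of Theorem~\ref{multithm}: for a factorization $a = m \cdot n$ with $m \ge n \ge 2$, the base $b = \frac{a}{n} + 1 = m+1$ yields the antipalindromic two-digit expansion $(a)_b = (n-1)(m - n + 1)$. The key observation is that \emph{different} factorizations $a = m \cdot n$ give \emph{different} bases $b = m+1$, since the base is determined by the larger factor $m$. Thus the number of distinct bases in which $a$ is antipalindromic is at least the number of ways to write $a$ as an ordered product $m \cdot n$ with $m \ge n \ge 2$, which is essentially the number of divisors of $a$ in a suitable range.

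Given this, the concrete approach is to exhibit, for each $n$, an explicit family of integers with at least $n$ distinct nontrivial factorizations. The cleanest choice is to take $a = 2^{n}$, or more safely a highly composite number such as $a = N!$ or $a = 2 \cdot 3 \cdots p_n$ (a product of the first several primes), whose divisor count grows without bound. I would verify that each divisor $d$ with $2 \le d \le \sqrt{a}$ gives a valid factorization $a = \frac{a}{d} \cdot d$ (playing the role of $m \cdot n$ with $n = d$), and hence a base $m + 1 = \frac{a}{d} + 1$ in which $a$ is antipalindromic. Since distinct divisors $d$ in this range produce distinct values of $\frac{a}{d}$ and therefore distinct bases, the count of such bases is at least the number of divisors of $a$ not exceeding $\sqrt{a}$, which I can make arbitrarily large by choosing $a$ with enough divisors.

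The only subtlety, and the step I expect to require the most care, is bookkeeping to ensure the bases are genuinely distinct and that each purported expansion is a legitimate two-digit antipalindrome (in particular that the leading digit $n-1$ is nonzero, which holds since $n \ge 2$, and that both digits lie in $\{0, 1, \dots, b-1\}$). One must also confirm that the base $c = 2a+1$ from Theorem~\ref{multithm} can be thrown in as an extra distinct base if needed to reach exactly $n$, though typically the divisor-based bases alone suffice. I would therefore state the argument as: choose $a$ to be any integer with at least $n$ divisors in the interval $[2, \sqrt{a}]$ (for instance a sufficiently high power of $2$), apply the factorization construction to each such divisor, and check distinctness of the resulting bases. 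Since there are infinitely many such highly composite $a$, this simultaneously yields infinitely many numbers antipalindromic in at least $n$ bases, proving the statement.
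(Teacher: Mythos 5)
Your proposal is correct and takes essentially the same route as the paper: the paper's proof sets $a=(2n)!$ and applies the two-digit construction of Theorem~\ref{multithm} to the divisors $2,3,\dots,n$, obtaining the distinct bases $\tfrac{a}{2}+1,\tfrac{a}{3}+1,\dots,\tfrac{a}{n}+1$ together with $2a+1$, which is precisely your idea of harvesting one base per small divisor of a highly composite number. Your write-up is in fact somewhat more careful than the paper's one-line proof, since you explicitly verify that distinct divisors $d\le\sqrt{a}$ give distinct bases and that infinitely many admissible $a$ exist.
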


\begin{proof}
Consider a number $a$ such that $a = (2n)!$. Theorem \ref{multithm} indicates that the number $a$ is antipalindromic in bases $\frac{a}{2}+1,\frac{a}{3}+1,\ldots,\frac{a}{n}+1$ and also $2a+1$.
\end{proof}

\begin{thm}
Let $b\in \mathbb N, b\geq 2$. Then there exists $m\in \mathbb N$ such that $m$ is antipalindromic in base $b$ and in at least one more base less than m.
\end{thm}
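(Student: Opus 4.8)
The plan is to produce, for each base $b$, an explicit antipalindromic number in base $b$ that is also visibly antipalindromic in a second, smaller base. The natural candidate is the two-digit antipalindrome in base $b$ with the largest possible leading digit, namely
$$m = b^2 - b = b(b-1),$$
whose base-$b$ expansion is $(b-1)\,0$; the antipalindromic condition $a_1 + a_0 = (b-1) + 0 = b-1$ holds trivially, and the leading digit $b-1$ is nonzero. First I would record that this $m$ is automatically composite for $b \geq 3$, so that Theorem~\ref{multithm}, applied to the factorisation $m = b \cdot (b-1)$, guarantees a second base in which $m$ is antipalindromic.

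Rather than unwinding the notation of Theorem~\ref{multithm}, I would simply exhibit that second base by hand: dividing $m = b^2 - b$ by $b+1$ gives $b^2 - b = (b-2)(b+1) + 2$, so the base-$(b+1)$ expansion of $m$ is $(b-2)\,2$, and $(b-2) + 2 = b = (b+1) - 1$ confirms antipalindromicity in base $b+1$, with leading digit $b-2 \neq 0$ exactly when $b \geq 3$. It then remains to verify the two requirements of the statement: that the second base differs from $b$ (clear, since $b+1 \neq b$) and that it lies below $m$, i.e. $b + 1 < b^2 - b$, equivalently $b^2 - 2b - 1 > 0$, which holds for every $b \geq 3$ (it already holds with room to spare at $b = 3$). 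This settles all bases $b \geq 3$, the witnessing extra base being $b+1$.

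The single place the template breaks down—and the only genuine obstacle—is the degenerate base $b = 2$, where $m = b(b-1) = 2$ is prime and admits no base smaller than itself, so the quadratic construction collapses. I would dispose of this case with one explicit witness: the number $m = 10$ has base-$2$ expansion $1010$, which is antipalindromic since $a_3 + a_0 = a_2 + a_1 = 1$, and has base-$6$ expansion $1\,4$, which is antipalindromic since $1 + 4 = 5 = 6 - 1$; as $6 < 10$, this meets the requirement for $b = 2$ (the number $12$, with expansions $1100$ in base $2$ and $22$ in base $5$, works equally well).

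Beyond the $b = 2$ exception there is essentially nothing subtle here. The reason I take the \emph{largest} two-digit antipalindrome $b(b-1)$ rather than the smallest, $2(b-1)$, is precisely to make $m$ large enough that the auxiliary base $b+1$ stays below it; with the small choice the second base from Theorem~\ref{multithm} can coincide with $b$ or exceed $m$, so a little care in the choice of $m$ removes all friction. I therefore expect the only work in the final write-up to be the routine bookkeeping of the boundary $b = 3$ in $b^2 - 2b - 1 > 0$ and the two short divisibility computations exhibiting the expansions.
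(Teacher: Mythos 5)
Your proof is correct, and it follows the same basic strategy as the paper's---exhibiting an explicit number with two-digit antipalindromic expansions in two bases---but with a different witness. The paper takes $m=4(b-1)$ for $b\geq 4$, whose expansions are $3\,(b-4)$ in base $b$ and $1\,(2b-3)$ in base $2b-1$, and then needs \emph{two} special cases: $b=2$ (with $m=12$, expansions $1100$ in base $2$ and $30$ in base $4$) and $b=3$ (with $m=72$, expansions $2200$ in base $3$ and $80$ in base $9$), since $4(b-1)$ fails to be antipalindromic in base $b$ when $b<4$. Your choice $m=b(b-1)$, with expansions $(b-1)\,0$ in base $b$ and $(b-2)\,2$ in base $b+1$, covers all $b\geq 3$ uniformly, so you need only the single special case $b=2$; in the language of Theorem~\ref{multithm}, you apply the factorization $b\cdot(b-1)$ with the factor $b-1$ (giving auxiliary base $\frac{m}{b-1}+1=b+1$), whereas the paper in effect uses the factor $2$ of $4(b-1)$ (giving auxiliary base $2b-1$). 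A minor point in your favour: you explicitly verify the requirement that the auxiliary base is less than $m$ (via $b^2-2b-1>0$ for $b\geq 3$), a condition the paper's tabular proof leaves implicit, and your closing remark correctly identifies why the smallest two-digit antipalindrome $2(b-1)$ would not work (its auxiliary base coincides with $b$ or exceeds $m$).
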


\begin{proof}
$$\begin{array}{lll}
\text{base}\ b & m& \text{two expansions}\\

2  & 12    & (12)_2=1100  \\
  &     &  (12)_4=30  \\
3 & 72   & (72)_3=2200\\
 &    & (72)_9=80\\
\geq 4 & 4\cdot (b-1)    & (m)_b=3\ (b-4)\\
&    & (m)_{2b-1}=1\ (2b-3)\\
\end{array}
$$

\end{proof}

\begin{thm}
Let $p,q\in \mathbb N$ such that $\gcd (p,q)=d$, $p=p'\cdot d$, $q=q'\cdot d$ and $p\geq q'>1$, $q\geq p'>1$. Then the number $m=p'\cdot q'\cdot d = p\cdot q' = q\cdot p'$ is antipalindromic in bases $p+1$ and $q+1$.
\end{thm}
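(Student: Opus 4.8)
The plan is to prove antipalindromicity in each base by explicitly constructing a two-digit expansion and checking that the two digits sum to one less than the base, which is exactly the antipalindrome condition from \eqref{eq:antipalindrom} when $n=1$. The whole argument is a direct computation; the hypotheses $p\geq q'>1$ and $q\geq p'>1$ are precisely what is needed to guarantee that the expansions I write down are legitimate.

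First I would handle base $b=p+1$. Using the identity $m=p\cdot q'$ from the statement together with $p=(p+1)-1$, I would write
\begin{equation*}
m = p\cdot q' = q'(p+1)-q' = (q'-1)(p+1)+\bigl((p+1)-q'\bigr),
\end{equation*}
so the candidate digits are $a_1=q'-1$ and $a_0=p+1-q'$ (the expansion $(m)_{p+1}=a_1a_0$). The next step is to confirm this is a valid two-digit expansion in base $p+1$: the leading digit $a_1=q'-1$ is nonzero exactly because $q'>1$, and both digits lie in $\{0,\dots,p\}$ because $q'\leq p$ gives $0\le a_0=p+1-q'\le p$ and $0<a_1=q'-1\le p-1$. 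Finally, I would verify the antipalindrome condition: $a_1+a_0=(q'-1)+(p+1-q')=p=(p+1)-1$, which is condition \eqref{eq:antipalindrom} for a two-digit word in base $p+1$. Hence $m$ is antipalindromic in base $p+1$.

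For base $b=q+1$ I would argue symmetrically, now using the identity $m=q\cdot p'$ and $q=(q+1)-1$ to obtain
\begin{equation*}
m = q\cdot p' = (p'-1)(q+1)+\bigl((q+1)-p'\bigr),
\end{equation*}
with digits $a_1=p'-1$ and $a_0=q+1-p'$. Here the roles of the two hypotheses swap: the leading digit is nonzero because $p'>1$, the digits stay in range because $p'\le q$, and the sum $a_1+a_0=(p'-1)+(q+1-p')=q=(q+1)-1$ again yields antipalindromicity.

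I do not expect any genuine obstacle: the only point requiring care is the digit-range bookkeeping, and this is exactly where the four inequalities in the hypothesis are consumed ($q'>1$ and $p'>1$ force the leading digits to be nonzero so the words really have two digits, while $p\ge q'$ and $q\ge p'$ keep the low digits nonnegative). One should also note in passing that $m\ge p+1$ and $m\ge q+1$ so that a two-digit reading is appropriate, which follows immediately since $q',p'\ge 2$.
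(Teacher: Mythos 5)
Your proof is correct and follows exactly the same route as the paper: the paper's proof simply exhibits the two expansions $(m)_{p+1}=(q'-1)\,(p+1-q')$ and $(m)_{q+1}=(p'-1)\,(q+1-p')$, which are precisely the digit pairs you construct. Your write-up just makes explicit the digit-range checks and the digit-sum verification that the paper leaves to the reader.
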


\begin{proof}
We have
\begin{align*}
(m)_{p+1}&=(q'-1)(p+1-q'),\\
(m)_{q+1}&=(p'-1)(q+1-p').
\end{align*}
\end{proof}

\begin{ex}
Let $p=4$, $q=6$, then $\gcd(4,6)=2$. \\ The number $m=12$ is antipalindromic in bases $5$ and $7$:  $(12)_5=22$, $(12)_7=15$.
\end{ex}

We have mentioned in the introduction part that a palindromic number in base $b$ whose expansion is made up of palindromic sequences of length $\ell$ arranged in a palindromic order is palindromic in base $b^{\ell}$. Let us present a similar statement for antipalindromic numbers.
For its proof, we will need the following definition.
\begin{defi}
Let $b\in \mathbb N, b\geq 2$.
Consider a string $u = u_0u_1\ldots u_n$, where $u_i \in \{0,1,\ldots, b-1\}$. The {\em antipalindromic complement} of $u$ in base $b$ is \mbox{$A(u) = (b-1-u_n)(b-1-u_{n-1})\ldots (b-1-u_0)$.}
\end{defi}
\begin{thm}\label{thm:multi-base}
Let $b \in \mathbb N, b \geq 2$.
An antipalindromic number $m$ in base $b^n$, where $(m)_{b^n}=u_k \dots u_1 u_0$ and $u_k\geq b^{n-1}$, is simultaneously antipalindromic in base $b$ if and only if the expansion of $u_j$ in base $b$ of length $n$ (i.e., completed with zeroes if necessary) is a palindrome for all $j \in \{0,1, \dots, k\}$.
\end{thm}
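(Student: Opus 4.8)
The plan is to work entirely with base-$b$ digits, turning the two antipalindromicity conditions and the palindromicity condition into elementary relations among them. First I would split each base-$b^n$ digit into a block of $n$ base-$b$ digits: write $u_j=\sum_{i=0}^{n-1}c_{j,i}b^i$ with $c_{j,i}\in\{0,1,\dots,b-1\}$, padding with leading zeros so that each block has length exactly $n$. Concatenating the blocks gives
\[
(m)_b=c_{k,n-1}\cdots c_{k,0}\,c_{k-1,n-1}\cdots c_{k-1,0}\cdots c_{0,n-1}\cdots c_{0,0},
\]
a string of length $(k+1)n$; the hypothesis $u_k\ge b^{n-1}$ ensures $c_{k,n-1}\neq0$, so this really is the base-$b$ expansion of $m$ and has the stated length.

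The enabling observation is that subtracting from $b^n-1$ acts digitwise: since $b^n-1=\sum_{i=0}^{n-1}(b-1)b^i$, no borrow occurs, and the length-$n$ base-$b$ expansion of $b^n-1-u_{k-j}$ is exactly the digitwise complement of that of $u_{k-j}$. Consequently the antipalindromicity of $m$ in base $b^n$, namely $u_j=b^n-1-u_{k-j}$ for all $j$, is equivalent to the block-level relation
\[
c_{j,i}+c_{k-j,i}=b-1\qquad\text{for all }j\in\{0,\dots,k\},\ i\in\{0,\dots,n-1\},
\]
which I will call condition (A). In the language of the antipalindromic complement, (A) says that block $j$ is the digitwise complement of block $k-j$, in the same order and without reversal.

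Next I would read off antipalindromicity of $m$ in base $b$ in these same coordinates. The digit at global position $jn+i$ is $c_{j,i}$, and its mirror position $(k+1)n-1-(jn+i)=(k-j)n+(n-1-i)$ sits at intra-block index $n-1-i$ of block $k-j$. Hence $m$ is antipalindromic in base $b$ exactly when
\[
c_{j,i}+c_{k-j,\,n-1-i}=b-1\qquad\text{for all }j,i,
\]
condition (B), whereas palindromicity of the length-$n$ expansion of every $u_j$ is $c_{j,i}=c_{j,n-1-i}$ for all $i$, condition (P). Granting (A) throughout, the equivalence (B)$\iff$(P) reduces to two substitutions: (A) gives $c_{k-j,\,n-1-i}=b-1-c_{j,n-1-i}$, so (B) collapses to $c_{j,i}=c_{j,n-1-i}$, which is (P); reversing the manipulation recovers (B) from (A) and (P).

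The argument is largely bookkeeping, so the only points needing care are the enabling observation and the position alignment. The main obstacle is making the no-borrow step fully rigorous rather than pictorial: one must check that $u_j+u_{k-j}=b^n-1$ forces every digitwise sum $c_{j,i}+c_{k-j,i}$ to equal $b-1$, which uses that each digit lies in $\{0,\dots,b-1\}$ and that $b^n-1$ is the all-$(b-1)$ string, so no carry can propagate. Equivalently, and perhaps more cleanly, I could run the whole proof through the operation $A$: writing $B_j$ for the $j$-th block, $C$ for digitwise complement and $R$ for reversal so that $A=C\circ R$, base-$b$ antipalindromicity reads $B_j=A(B_{k-j})$ while base-$b^n$ antipalindromicity reads $B_j=C(B_{k-j})$; combining the two yields $C(B_{k-j})=R\bigl(C(B_{k-j})\bigr)$, i.e.\ each block is a palindrome, with the converse immediate.
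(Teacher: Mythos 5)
Your proof is correct and follows essentially the same route as the paper's: both decompose $(m)_b$ into length-$n$ blocks, observe that the base-$b^n$ antipalindromic complement $b^n-1-u_j$ acts as a digitwise (borrow-free) complement on the block, align the mirrored positions, and conclude that base-$b$ antipalindromicity combined with the block-complement relation is equivalent to every block being a palindrome. Your explicit coordinates $c_{j,i}$ and the operator identity $A=C\circ R$ merely make rigorous the position bookkeeping that the paper's proof states more informally.
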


\begin{proof}
The digits of $m$ in base $b^n$ satisfy $0 \leq u_j \leq b^n-1$.
Let us denote the expansion of $u_j$ in base $b$ by $(u_j)_b=v_{j,{n-1}}\dots v_{j, 1}v_{j, 0}$ (where the expansion of $u_j$ in base $b$ is completed with zeroes in order to have the length $n$ if necessary). The antipalindromic complement $A(u_j)$ of $u_j$ in base $b^n$ equals $b^n-1-u_j$ and its expansion in base $b$ equals $(A(u_j))_b=(b-1-v_{j,n-1})\dots (b-1-v_{j,1})(b-1-v_{j,0})$.
Since $m$ is antipalindromic in base $b^n$, we have $u_{k-j}=A(u_j)=b^n-1-u_j$ for all $j \in \{0,1,\dots, k\}$.\\
Let us now consider the expansion of $m$ in base $b$: it is obtained by concatenation of the expansions of $u_j$ in base $b$ for $j\in \{0,1,\dots, k\}$, i.e.,
$$(m)_b=(u_{k})_b\dots (u_1)_b(u_0)_b.$$
Following the assertion that $u_k \geq b^{n-1}$, the expansion $(u_k)_b$ starts in a non-zero. Thus, the length of the expansion $(m)_b$ equals $n\cdot |(m)_{b^n}|$.\\

\noindent The number $m$ is antipalindromic in base $b$ if and only if $(u_{k-j})_b=A((u_j)_b)$ for all $j\in \{0,1,\dots, k\}$, i.e.,
$$\begin{array}{l}(b-1-v_{j,n-1})\dots (b-1-v_{j,1})(b-1-v_{j,0})=\\
=A(v_{j,{n-1}}\dots v_{j, 1}v_{j, 0})=\\
=(b-1-v_{j, 0})(b-1-v_{j, 1})\dots (b-1-v_{j,{n-1}}).
\end{array}$$
Consequently, $m$ is antipalindromic in base $b$ if and only if \\ $(u_j)_b=(v_{j,{n-1}}\dots v_{j, 1}v_{j, 0})$ is a palindrome for all $j \in \{0,1,\dots, k\}$.
\end{proof}

\begin{ex}
Consider $m=73652$. Then $(m)_{27}=3\ 20 \ 6 \ 23=u_3u_2u_1u_0$, thus $m$ is antipalindromic in base $27=3^3$. However, $(m)_3=10202020212$, thus $m$ is not antipalindromic in base $3$. If we cut $(m)_3$ into blocks of length 3, then all of them are palindromic. However, the first one equals $010$ and it starts in zero, hence the assumption $u_3 \geq 9$ of Theorem~\ref{thm:multi-base} is not met.
\end{ex}

\begin{ex}
Consider $b=10$. The number $6633442277556633$ is an antipalindromic number both in base $10$ and $100$.
\end{ex}

\section{Open problems }\label{sec:OpenProblems}
In this paper, we carried out a thorough study of antipalindromic numbers and described known results on palindromic numbers in order to draw a comparison.
It brings a number of new results:

\begin{itemize}
\item We described divisibility of antipalindromic numbers and showed that non-trivial antipalindromic primes may be found only in base 3.
\item We found several classes of antipalindromic squares and higher powers.
\item We described pairs of bases such that there is a number antipalindromic in both of these bases. Moreover, we obtained the following interesting results concerning multi-base antipalindromic numbers:
\begin{itemize}
\item For any composite number, there exist at least two bases such that this number is antipalindromic in both of them.
\item For every $n\in \mathbb N$, there exist infinitely many numbers that are antipalindromic in at least $n$ bases.
\item Let $b\in \mathbb N, b\geq 2$. Then there exists $m\in \mathbb N$ such that $m$ is antipalindromic in base $b$ and in at least one more base less than $m$.
\end{itemize}
\end{itemize}

This paper is based on the bachelor thesis~\cite{KrBP}, where some more results were obtained:
\begin{itemize}
\item the number of (anti)palindromic numbers of a certain length and the maximum and minimum number of antipalindromic numbers between palindromic numbers and vice versa;
\item an explicit formula for the length of gaps between neighboring antipalindromic numbers.
\end{itemize}

We created a user-friendly application for all the questions studied~\cite{Kr}, which is freely available to the reader. Based on computer experiments, we state the following conjectures and open problems:

\begin{enumerate}
\item
Are there infinitely many antipalindromic primes in base $3$? (We know there is never more than one antipalindromic prime in any other base except for $3$.) During an extended search, the first 637807 antipalindromic primes have been found.
\item
We conjecture it is possible to express any integer number (except for 24, 37, 49, 117, and 421) as the sum of at most three antipalindromic numbers in base 3. Our computer program shows that the answer is positive up to $5\cdot 10^6$.
\item
We conjecture it is possible to express any palindromic number in base 3 as the sum of at most three antipalindromic numbers in base 3. This conjecture follows evidently from the previous one, and we verified it even for larger numbers, up to $10^8$.
\item
Is there a pair of bases such that it is impossible to find any number that has an antipalindromic expansion in both of them? According to our computer experiments, suitable candidates seem to be the bases 6 and 8. It is to be studied in the future.
    \end{enumerate}

\section{Acknowledgements}
L.~Dvo\v r\'akov\'a received funding from the Ministry of Education, Youth and Sports of the Czech Republic through the project\\ no. CZ.02.1.01/0.0/0.0/16\_019/0000778.

\bibliographystyle{actapoly}
\bibliography{biblio}

\end{document}